\newtheorem{theorem}{Theorem}
\newtheorem{proposition}[theorem]{Proposition}
\newtheorem{lemma}[theorem]{Lemma}
\newtheorem{definition}[theorem]{Definition}
\newtheorem{corollary}[theorem]{Corollary}
\newtheorem{example}[theorem]{Example}
\newtheorem{remark}[theorem]{Remark}
\newtheorem{conjecture}[theorem]{Conjecture}
\newcommand{\defin}[1]{\textbf{\emph{#1}}}
\newcommand{\N}{\mathbb{N}}
\newcommand{\Z}{\mathbb{Z}}
\newcommand{\R}{\mathbb{R}}
\newcommand{\C}{\mathbb{C}}
\newcommand{\Tset}{\mathbf{T}}
\newcommand{\xvec}{\mathbf{x}}
\newcommand{\zvec}{\mathbf{z}}
\newcommand{\wvec}{\mathbf{w}}
\newcommand{\tvec}{\mathbf{t}}
\newcommand{\onevec}{\mathbf{1}}
\newcommand{\lambdavec}{{\boldsymbol\lambda}}
\newcommand{\alphavec}{{\boldsymbol\alpha}}
\newcommand{\muvec}{{\boldsymbol\mu}}
\newcommand{\nuvec}{{\boldsymbol\nu}}
\newcommand{\kappavec}{{\boldsymbol\kappa}}
\newcommand{\dominate}{\trianglerighteq}
\newcommand{\symS}{\mathfrak{S}}
\newcommand{\tinsert}{\boxtimes}
\DeclareMathOperator*{\length}{\mathit{l}}
\begin{document}
\title{Stretched skew Schur polynomials are recurrent}

\author[P.~Alexandersson]{Per Alexandersson}

\address{ Department of Mathematics,
   Stockholm University,
   S-10691, Stockholm, Sweden}
\email{per@math.su.se}

\begin{abstract}
We show that sequences of skew Schur polynomials obtained from stretched semi-standard Young tableaux
satisfy a linear recurrence, which we give explicitly.
Using this, we apply this to finding certain asymptotic behavior of these Schur polynomials and 
present conjectures on minimal recurrences for stretched Schur polynomials.

\bigskip\noindent \textbf{Keywords:} Schur polynomials; tableau insertion; Young tableaux; recurrence; asymptotics
\end{abstract}

\maketitle

\section{Introduction}
Stretched skew tableaux, i.e.~skew semi-standard Young tableaux (SSYTs) of shape $k\muvec/k\nuvec$ for positive integers $k$,
$\muvec, \nuvec$ partitions, appear in many areas. For example, they appear naturally when studying Toeplitz matrix minors, see e.g.~\cite{Bumb02toeplitz}.
In an earlier paper \cite{Alexandersson12schur}, we found asymptotics of certain families of minors of banded Toeplitz matrices
by examining stretched skew tableaux. In this article we generalize the technique used in \cite{Alexandersson12schur} 
to explicitly give linear recurrences that skew Schur polynomials obtained from stretched semi-standard Young tableaux satisfy.

Our results appears to have close connection to systems of linear recurrences described in \cite{Hou03recurrent},
and this paper suggests that a generalization of some results in \cite{Hou03recurrent} is possible.

As an easy consequence of this paper, it follows that the number of SSYTs of shape $k\muvec/k\nuvec$
is a polynomial in $k.$ This is a well-known fact which can be proved by elementary methods.
However, it might be possible to apply the methods in this paper to prove polynomiality of stretched Kostka numbers.
This is also known, but currently requires application of non-trivial tools of different areas, see \cite{Kirillov88thebethe,King04stretched}.

As a second consequence, we prove a certain asymptotic behavior of roots of stretched skew Schur polynomials,
and conjecture the asymptotic behavior of a general system of stretched skew Schur polynomials.
Asymptotics and root location of Schur polynomials seems to be a rather unexplored topic,
except in areas where the Schur polynomials have an additional meaning, for example, as minors of Toeplitz matrices.

We use multi-index notation, i.e. $\xvec = (x_1,\dots,x_n),$ $\xvec^\alphavec = x_1^{\alpha_1}x_2^{\alpha_2} \cdots x_n^{\alpha_n}.$
The length of a vector is considered to be $n$ unless stated otherwise.
Let us now formulate the main theorem of the paper:

\begin{theorem}\label{thm:greedyskewrecurrence}
Let $n$ be a positive integer and let $\kappavec,\lambdavec, \muvec,\nuvec$ be partitions of length at most $n$
such that $\muvec \supseteq \nuvec$ and $k(\muvec-\nuvec) \supseteq \lambdavec-\kappavec$ for some positive integer $k.$
Then, for sufficiently large $r,$ the sequence $\{s_{(\kappavec + k \muvec)/(\lambdavec + k \nuvec)}(\xvec)\}_{k=r}^\infty$ 
satisfy a linear recurrence with coefficients polynomial in $x_1,x_2,\dots,x_n$. 
A characteristic polynomial for the recurrence is given by
\begin{equation}\label{eq:greedyskewrecurrence}
\chi(t) = \prod_{T \in \Tset_{\muvec/\nuvec}^n} (t - \xvec^{w(T)})
\end{equation}
where $\Tset_{\muvec/\nuvec}^n$ is the set of semi-standard Young tableaux of shape $\muvec/\nuvec$ with entries in $1,2,\dots, n$ 
and $w(T)$ is the weight of the tableau $T.$
In particular, if $\lambdavec=\kappavec=\emptyset$ we may take $r=0.$
\end{theorem}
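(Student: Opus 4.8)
The plan is to realize the sequence $\{s_{(\kappavec+k\muvec)/(\lambdavec+k\nuvec)}(\xvec)\}_k$ as the iterates of a single transfer operator whose characteristic polynomial is exactly \eqref{eq:greedyskewrecurrence}. Recall that a scalar sequence over the field $\Q(x_1,\dots,x_n)$ obeys the linear recurrence with characteristic polynomial $\chi(t)=\prod_{T}(t-\xvec^{w(T)})$ precisely when $\chi$ annihilates it under the shift operator for all large indices; equivalently, up to finitely many initial terms its generating function equals $P(z)/\prod_{T\in\Tset_{\muvec/\nuvec}^n}(1-\xvec^{w(T)}z)$ with $\deg P<|\Tset_{\muvec/\nuvec}^n|$. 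So it suffices to build a square matrix $M$ whose rows and columns are indexed by $\Tset_{\muvec/\nuvec}^n$, together with a column vector $v$ and a row vector $u^{\top}$, such that
\[
s_{(\kappavec+k\muvec)/(\lambdavec+k\nuvec)}(\xvec)=u^{\top}M^{\,k-r}v \qquad (k\ge r),
\]
and then to compute $\det(tI-M)=\prod_{T\in\Tset_{\muvec/\nuvec}^n}(t-\xvec^{w(T)})$.

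The combinatorial engine is the insertion operation $\tinsert$. First I would show that every SSYT of shape $(\kappavec+k\muvec)/(\lambdavec+k\nuvec)$ arises, weight-multiplicatively, from one of shape $(\kappavec+(k-1)\muvec)/(\lambdavec+(k-1)\nuvec)$ by adjoining a single tableau $T\in\Tset_{\muvec/\nuvec}^n$, so that the monomial $\xvec^{w(\cdot)}$ factors as the product of $\xvec^{w(T)}$ over the inserted pieces. Iterating down to the base shape records each tableau as a chain $T_1,\dots,T_k$ of $\muvec/\nuvec$-tableaux subject to a local compatibility relation ``$S$ may be followed by $T$''. This is encoded by setting $M_{S,T}=\xvec^{w(T)}$ when $S$ may be followed by $T$ and $M_{S,T}=0$ otherwise, while $u$ and $v$ absorb the two ends of the chain and the fixed boundary contributed by $\kappavec,\lambdavec$. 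The matrix identity for $s$ then follows by expanding $u^{\top}M^{\,k-r}v$ as the sum over all admissible chains.

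To extract the characteristic polynomial I would order $\Tset_{\muvec/\nuvec}^n$ by a linear extension of the dominance order $\dominate$ on weights (refined arbitrarily among tableaux of equal weight) and prove that the compatibility relation respects this order, so that $M$ is triangular; the essential extra point is that the relation can be taken \emph{reflexive}, i.e.\ every $T$ may follow itself. Granting this, the diagonal entries are exactly $M_{T,T}=\xvec^{w(T)}$, whence $\det(tI-M)=\prod_{T\in\Tset_{\muvec/\nuvec}^n}(t-\xvec^{w(T)})$, the polynomial of \eqref{eq:greedyskewrecurrence}. The threshold $r$ enters because the boundary shape $\kappavec/\lambdavec$ perturbs the first few insertions; once $k$ is large enough that $k(\muvec-\nuvec)\supseteq\lambdavec-\kappavec$ leaves room for a full clean period, the decomposition stabilizes and the same $M$ governs every later step. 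When $\kappavec=\lambdavec=\emptyset$ there is no boundary to absorb, the chain description is exact from the outset, and one may take $r=0$.

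The main obstacle is the insertion step and, within it, the reflexivity: the naive relation ``the last entries of $S$ do not exceed the first entries of $T$'' is \emph{not} reflexive, and a transfer matrix built from it carries zeros, not the weights $\xvec^{w(T)}$, on some diagonal entries, so it yields a coarser polynomial than \eqref{eq:greedyskewrecurrence}. The delicate work is therefore to define $\tinsert$ so that the compatibility relation is genuinely reflexive while remaining weight-multiplicative and triangular with respect to $\dominate$; a natural candidate is an entrywise comparison of the fillings, which is reflexive by construction, and verifying that this matches the insertion is the technical heart of the argument. A cheap consistency check that the sequence is at least recurrent comes from Jacobi--Trudi, since each $h_m(\xvec)$ satisfies the recurrence with roots $x_1,\dots,x_n$ and hence each stretched entry $h_{c+kd}(\xvec)$ is $C$-finite in $k$, with entries having $d<0$ vanishing for large $k$; but determinantal cancellation makes this route miss the exact roots of \eqref{eq:greedyskewrecurrence}, so it does not replace the insertion argument. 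Finally I would pin down the explicit value of $r$ in terms of $\kappavec,\lambdavec,\muvec,\nuvec$ and confirm that the degenerate contributions, which are exactly the ones that die for large $k$, justify passing to the tail.
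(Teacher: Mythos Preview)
Your transfer-matrix plan is a different route from the paper's, and the gap you yourself flag is real and, as stated, fatal. With the naive ``last slice $\le$ first slice'' compatibility the matrix is not reflexive, so its characteristic polynomial is not $\prod_T(t-\xvec^{w(T)})$; and your proposed repair, taking the compatibility to be entrywise comparison of fillings, does not give a bijection between chains and stretched tableaux. Already for $\muvec=(2)$, $\nuvec=\emptyset$, $n=2$ one counts, at $k=2$, six chains $S_1\le S_2$ under entrywise comparison but only five SSYTs of shape $(4)$, so $u^\top M^{\,k}v$ cannot equal $s_{(2k)}$ for any choice of $u,v$. The triangularity claim with respect to $\dominate$ is likewise asserted without argument. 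In short, the proposal never produces a compatibility relation that is simultaneously reflexive, triangular, and bijective with the stretched tableaux, and the example shows the suggested candidate fails.

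The paper sidesteps this entirely by \emph{not} seeking a bijective chain decomposition. Its insertion $\tinsert$ is row-wise concatenation followed by sorting, hence commutative and associative, and the key lemma is only an \emph{existence} statement: once $r$ is large enough that $\muvec/\nuvec$ ``sits inside'' $(\kappavec+r\muvec)/(\lambdavec+r\nuvec)$, every $T\in\Tset_{(\kappavec+k\muvec)/(\lambdavec+k\nuvec)}^n$ factors as $T''\tinsert T'$ with $T'\in\Tset_{\muvec/\nuvec}^n$. The non-uniqueness of this factorization is then handled by inclusion--exclusion over \emph{subsets} $\{T_1,\dots,T_j\}\subseteq\Tset_{\muvec/\nuvec}^n$, and the alternating sums that appear are precisely the elementary symmetric functions in the $\xvec^{w(T)}$, i.e.\ the coefficients of $\chi(t)=\prod_T(t-\xvec^{w(T)})$. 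Applying the weight homomorphism $T\mapsto\xvec^{w(T)}$ to this tableau-level identity yields the recurrence directly. So rather than forcing a triangular transfer matrix, the paper exploits commutativity of $\tinsert$ to make the characteristic polynomial fall out of inclusion--exclusion; this is the missing idea in your plan.
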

\begin{remark}
Notice that \eqref{eq:greedyskewrecurrence} above does not necessary give the shortest possible recurrence in general. 
In Corr.~\ref{corr:minimalequation} below, we give a description of the minimal recurrence.
In Corr.~\ref{corr:asymptotic}, we use \eqref{eq:greedyskewrecurrence} for finding certain asymptotics 
of the Schur polynomials in the sequence $\{s_{(\kappavec + k \muvec)/(\lambdavec + k \nuvec)}(\xvec)\}_{k=r}^\infty.$
\end{remark}

\section{Preliminaries}
For the sake of completeness we define the basic notions in the theory of Young tableaux and Schur polynomials.
This material can be found in standard reference literature such as \cite{Macdonald79symmetric}.

\begin{definition}
A \defin{partition} $\lambdavec = (\lambda_1,\dots,\lambda_n)$ 
is a finite weakly decreasing sequence of non-negative integers; 
$$\lambda_1\geq \lambda_2 \geq \cdots \geq \lambda_n \geq 0.$$
The \defin{parts} of a partition are the positive entries 
and the number of positive parts is the \defin{length} of the partition, denoted $\length(\lambdavec)$.
The \defin{weight}, $|\lambdavec|$ is the sum of the parts.
\end{definition}
The empty partition $\emptyset$ is the partition with no parts. 
The partition $(1,1,\dots,1)$ with $k$ entries equal to $1$ is denoted $\onevec^k.$
We use the standard convention that $\lambda_i=0$ if $i>\length(\lambdavec).$
Addition and multiplication with a scalar on partitions is performed elementwise.

\begin{definition}
For partitions $\lambdavec,\muvec$ we say that $\lambdavec \supseteq \muvec$ if $\lambda_i \geq \mu_i$ for all $i.$
This is the \defin{inclusion order}. 
We also define $\lambdavec \dominate \muvec$ if $|\lambdavec|=|\muvec|$ and
$\sum_{i=1}^k \lambda_i \geq \sum_{i=1}^k \mu_i$ for all $k.$
This is the \defin{domination order}. 
\end{definition}

\subsection{Young diagrams and Young tableaux}

\begin{definition}
Let $\lambdavec \supseteq \muvec$ be partitions.
A \defin{skew Young diagram} of \defin{shape} $\lambdavec/\muvec$ is an arrangement of ``boxes'' in the plane with coordinates given by
$$\{ (i,j) \in \Z^2 | \mu_i < j \leq \lambda_i \}.$$
Here, $i$ is the row coordinate, $j$ is the column coordinate.
If $\muvec=\emptyset$ we will just refer to the shape as $\lambdavec$ and the diagram is a regular Young diagram.
\end{definition}
There are at least two other ways to draw these diagrams. In this text, the English convention is used.
Notice that the diagram of shape $\lambdavec'/\muvec'$ is the transpose of the diagram with shape $\lambdavec/\muvec.$

In this context, it will be convenient to define the \defin{skew part} of a skew diagram
as special boxes with coordinates $\{ (i,j) \in \Z^2 | 1\leq j \leq \mu_i \leq \lambda_i \}.$
We will call these boxes \defin{skew}. 
(In Fig.~\ref{fig:skewDiagramExample} there are for example seven skew boxes and six ordinary boxes.)

\begin{figure}[ht!]
  \begin{subfigure}[b]{0.48\textwidth}
   \centering
   $\young(\hfil\hfil\hfil\hfil\hfil,\hfil\hfil\hfil\hfil,\hfil\hfil,\hfil\hfil)$
  \subcaption{Diagram of shape $(5,4,2,2)$}\label{fig:diagramExample}
  \end{subfigure}
  \begin{subfigure}[b]{0.48\textwidth}
   \centering
   $\young(\blacksquare\blacksquare\blacksquare\hfil\hfil,\blacksquare\blacksquare\hfil\hfil,\blacksquare\blacksquare\hfil,\hfil)$
  \subcaption{Diagram of shape $(5,4,3,1)/(3,2,2)$}\label{fig:skewDiagramExample}
  \end{subfigure}
\caption{}
\end{figure}

\begin{definition}
A \defin{semi-standard Young tableau}\footnote{Also called column-strict tableau, or reverse plane partition} (or SSYT) 
is a Young diagram with natural numbers in the boxes,
such that each row is weakly increasing and each column is strictly increasing.
\end{definition}
We denote by $\Tset_{\lambdavec/\muvec}^n$ the set of SSYTs of shape $\lambdavec/\muvec$ with entries in $1,2,\dots,n.$
For an example of an SSYT, see Fig.~\ref{fig:ssytExample}.

\begin{figure}[ht!]
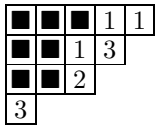

   \centering
   $\young(\blacksquare\blacksquare\blacksquare11,\blacksquare\blacksquare13,\blacksquare\blacksquare2,3)$
\caption{SSYT of shape $(5,4,3,1)/(3,2,2)$}\label{fig:ssytExample}
\end{figure}

\subsection{Schur polynomials}

\begin{definition}
Given an SSYT $T,$ with entries in $1,2\dots,n,$ we define the \defin{weight} $w(T)$ of $T$ as a vector 
$\tvec = (t_1,t_2,\dots,t_n)$ given by $t_k = \#\{ b_{ij} \in T | b_{ij} = k \}.$
Thus, $t_k$ counts the number of boxes containing the number $k.$
\end{definition}

\begin{definition}
The \defin{skew Schur polynomial} is defined as
$$s_{\lambdavec/\muvec}(\xvec) = \sum_{T \in \Tset_{\lambdavec/\muvec}^n} \xvec^{w(T)}$$
where $\xvec=(x_1,\dots,x_n).$
It can be shown that these polynomials are symmetric in $x_1,x_2,\dots, x_n$.
\end{definition}

\section{Proofs}

\subsection{Tableau insertion}
We now define an operation on pairs of SSYTs:
\begin{definition}
Given $T_1 \in \Tset_{\kappavec/\lambdavec}^n$ and  $T_2  \in \Tset_{\muvec/\nuvec}^n$ we define the \defin{tableau insertion} $T_1 \tinsert T_2$ as 
the SSYT obtained by concatenating the boxes row-wise and then sorting each row in increasing order, with respect to their content. 
The skew boxes are treated as being less than the ordinary boxes.

We also use the same notation for the corresponding operation on diagram shapes.
\end{definition}
From this definition, it is clear that the product $\tinsert$ is commutative and associative.
It is however not obvious that the result of this operation is an SSYT, 
so we prove this in the following proposition:
\begin{proposition}\label{prop:insertion}
If $T_1 \in \Tset_{\kappavec/\lambdavec}^n$ and  $T_2  \in \Tset_{\muvec/\nuvec}^n$ then
$T_1 \tinsert T_2 \in \Tset_{(\lambdavec + \muvec)/(\kappavec+\nuvec)}^n$ and 
$w(T_1 \tinsert T_2) = w(T_1) + w(T_2).$
\end{proposition}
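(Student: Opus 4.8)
The plan is to verify three things about $T = T_1 \tinsert T_2$: that its weight equals $w(T_1) + w(T_2)$, that it has the asserted shape, and that it is genuinely an SSYT. The weight statement is immediate: the ordinary (non-skew) boxes of $T$ are exactly the disjoint union of the ordinary boxes of $T_1$ and of $T_2$, since the insertion merely pools all boxes of a given row and re-sorts them; hence the number of boxes equal to $k$ in $T$ is the sum of the corresponding counts in $T_1$ and $T_2$, giving $w(T) = w(T_1) + w(T_2)$. For the shape I would count row by row. In row $i$, $T_1$ contributes $\lambda_i$ skew boxes and $\kappa_i - \lambda_i$ ordinary ones, while $T_2$ contributes $\nu_i$ skew and $\mu_i - \nu_i$ ordinary. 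After merging and sorting, with all skew boxes ranked below the ordinary entries, row $i$ of $T$ consists of $\lambda_i + \nu_i$ skew boxes followed by $(\kappa_i - \lambda_i) + (\mu_i - \nu_i)$ ordinary boxes, for a total of $\kappa_i + \mu_i$ boxes. Thus the outer shape is $\kappavec + \muvec$ and the skew (inner) shape is $\lambdavec + \nuvec$; since $\kappavec,\muvec$ and $\lambdavec,\nuvec$ are partitions with $\lambda_i \le \kappa_i$ and $\nu_i \le \mu_i$, this is a legitimate skew shape.

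It remains to check that $T$ is an SSYT. Weak increase along rows is built into the construction, so the real content is strict increase down columns, and here I would argue by a counting characterization rather than by tracking individual entries. For a skew filling in which each row lists its skew boxes first and then its ordinary entries weakly increasingly, set $\phi_i(v)$ to be the number of boxes in row $i$ that are either skew or contain a value at most $v$; equivalently $\phi_i(v)$ is the rightmost column of row $i$ occupied by such a box, with $\phi_i(0)$ equal to the number of skew boxes in row $i$. I would first prove the lemma that such a filling is column-strict if and only if $\phi_{i+1}(v) \le \phi_i(v-1)$ for every row $i$ and every value $v \ge 1$.

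Granting this characterization, the proposition follows from additivity. Since the skew boxes and the value-at-most-$v$ ordinary boxes of $T$ in row $i$ are precisely those of $T_1$ together with those of $T_2$, a direct count gives $\phi^T_i(v) = \phi^{T_1}_i(v) + \phi^{T_2}_i(v)$. Because $T_1$ and $T_2$ are SSYTs, each satisfies the inequality supplied by the lemma, and adding the two inequalities yields $\phi^T_{i+1}(v) \le \phi^T_i(v-1)$; thus $T$ is column-strict, and so $T$ is an SSYT of the claimed shape. The step I expect to be the main obstacle is the lemma itself, namely the equivalence between column-strictness and the inequality on $\phi$, and in particular the boundary case in which the box sitting above an ordinary box is skew. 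What makes this case harmless is that the inner shapes are partitions, so the number of skew boxes weakly decreases from row $i$ to row $i+1$; I would isolate this monotonicity as the key step ensuring that no constraint is lost when a column passes from the skew region into the ordinary region.
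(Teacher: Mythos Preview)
Your argument is correct, and it takes a genuinely different route from the paper. The paper first reduces to the case of inserting a single column, observing that $T_1$ factors as $C_1 \tinsert \cdots \tinsert C_k$ over its own columns, and then checks column-strictness of $C \tinsert T_2$ by a three-case analysis according to whether the inserted entries $t_i$ and $t_{i+1}$ land in the same column, with $t_i$ to the right of $t_{i+1}$, or with $t_i$ to the left of $t_{i+1}$. Your approach replaces this reduction-and-cases with the numerical characterization $\phi_{i+1}(v)\le\phi_i(v-1)$ and then invokes additivity of $\phi$ under $\tinsert$. This is cleaner and case-free, and it makes the monoid structure transparent: the inequality is linear in $\phi$, so closure under $\tinsert$ for any number of factors follows at once. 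The paper's approach, on the other hand, foregrounds the column factorization $T = C_1 \tinsert \cdots \tinsert C_k$, which is itself reused later in the paper (for instance, in the lemma that extracts a sub-tableau of a prescribed shape that ``sits inside'' a larger one); so the detour through columns is not wasted effort there. Your identification of the boundary case---an ordinary box in row $i+1$ sitting below a skew box in row $i$---and your observation that it is handled by the partition condition on the inner shape is exactly the point that needs care in the lemma, and you have it right.
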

\begin{proof}
From the definition, it is evident that the shape of $T_1 \tinsert T_2$ is $(\kappavec + \muvec)/(\lambdavec+\nuvec).$
It is also clear that the rows are weakly increasing, by construction. 
It suffices to show that the columns in $T_1 \tinsert T_2$ are strictly increasing.

Given an SSYT $T_1,$ we may view its columns $C_1,C_2,\dots, C_k$ as individual SSYTs.
Since the rows are already ordered, it is evident that $C_1 \tinsert C_2 \tinsert \cdots C_k = T_1.$
Therefore, $T_1 \tinsert T_2 = C_1 \tinsert C_2 \tinsert \cdots C_k \tinsert T_2$
and it suffices to show that $C \tinsert T_2$ is an SSYT for a general column $C.$

Let $C$ be a column with row entries $t_1, t_2, \dots, t_k$ 
where we treat skew boxes in row $i$ as having the value $n-i.$
This ensures that $t_1<t_2<\dots<t_k.$ We use the same treatment for the skew boxes in $T_2.$

It suffices to show that any two boxes in a column in adjacent rows are strictly increasing in $C \tinsert T_2.$
Let us consider rows $i$ and $i+1$ in $C \tinsert T_2.$ 
There are three cases to consider:

\textbf{Case 1:}
The numbers $t_i$ and $t_{i+1}$ are in the same column:
$$
\begin{bmatrix}
\cdots & a_1 & t_i &  a_2 & \cdots & a_m & \cdots \\
\cdots & b_1 & t_{i+1} & b_2 & \cdots & b_{m} & \cdots \\
\end{bmatrix}
$$
Since $t_i<t_{i+1},$ and all the other columns are unchanged, the columns are strictly increasing.

\textbf{Case 2:}
The number $t_i$ is to the right of $t_{i+1}$:
$$
\begin{bmatrix}
\cdots & t_i & a_1 & a_2 & \cdots & a_{m-1} & a_m & \cdots \\
\cdots & b_1 & b_2 & b_3 & \cdots & b_{m}   & t_{i+1} & \cdots \\
\end{bmatrix}
$$
The columns where strictly increasing before the insertion.
Therefore, $t_i\leq a_1 < b_1,$ $a_m < b_m \leq t_{i+1}$ and $a_j<b_j\leq b_{j+1}.$
It follows that all the columns are strictly increasing.

\textbf{Case 3:}
The number $t_i$ to the left of $t_{i+1}$:
$$
\begin{bmatrix}
\cdots & a_1 & a_2 & \cdots & a_{m-1} & a_m & t_i & \cdots \\
\cdots & t_{i+1} & b_1 & b_2 & b_3 & \cdots & b_{m} & \cdots \\
\end{bmatrix}
$$
We have that $a_j\leq t_i < t_{i+1}\leq b_k$ for $1\leq j,k\leq m,$ since the rows are increasing.
Thus, it is clear that all the columns are strictly increasing. 
It is easy to see that the result is an SSYT even if $k\neq n.$
\end{proof}

\begin{remark}
We observe that $\tinsert$ gives a monoid\footnote{Notice: this is \emph{not} the plactic monoid which is a different type of monoid structure.} structure on the set of SSYTs.
It is natural to construct the corresponding commutative \emph{ring} $\Tset_R^n$ by considering formal sums of Young tableaux
with entries in $1,2,\dots,n.$ The operation $\tinsert$ serves as multiplication, and the empty tableau $\emptyset$ acts as multiplicative identity.

For tableaux $T$ define the map $\phi(T) = \xvec^{w(T)}$ and extend it linearly to formal sums.
It is evident that $\phi(T_1 \tinsert T_2) = \phi(T_1)\phi(T_2)$ so $\phi$ acts as a ring homomorphism
from $\Tset_R^n$ to $\Z[x_1,\dots,x_n].$ 
It is therefore natural to consider $|w(\cdot )|$ as a grading on $\Tset_R^n.$
Notice that the ring $\Tset_R^n$ is finitely generated for each $n,$ a possible set of generators being all
tableaux of shape $\lambdavec/\muvec$ with $\lambda_i \leq 1$ and $\length(\lambdavec)\leq n.$ 
In other words, any tableau can be ``factored as a product of columns''.
The cancellation property also hold in $\Tset_R^n,$ 
namely if $T_1 \tinsert T = T_2 \tinsert T$ then $T_1 = T_2.$
\end{remark}

The following definition and lemmas are needed for proving the existence and to determine the constant $r$ in Thm.~\ref{thm:greedyskewrecurrence}:
\begin{definition}
Given two skew shapes $\muvec/\nuvec, \kappavec/\lambdavec,$ we say that $\muvec/\nuvec$ \defin{sits inside} $\kappavec/\lambdavec$
if every column in the diagram of shape $\muvec/\nuvec$ can be found in the diagram of shape $\kappavec/\lambdavec,$ counting multiplicities.
\end{definition}
\begin{figure}[ht!]
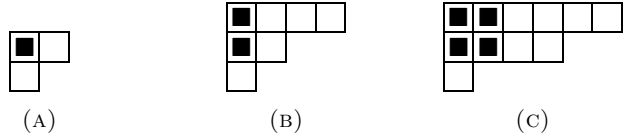
\label{fig:insideExample}
  \begin{subfigure}[b]{0.25\textwidth}
   \centering
   $\young(\blacksquare\hfil,\hfil)$
  \subcaption{}\label{fig:insideA}
  \end{subfigure}
  \begin{subfigure}[b]{0.25\textwidth}
   \centering
   $\young(\blacksquare\hfil\hfil\hfil,\blacksquare\hfil,\hfil)$
\subcaption{}\label{fig:insideB}
  \end{subfigure}
  \begin{subfigure}[b]{0.25\textwidth}
   \centering
   $\young(\blacksquare\blacksquare\hfil\hfil\hfil\hfil,\blacksquare\blacksquare\hfil\hfil,\hfil)$
\subcaption{}\label{fig:insideC}
  \end{subfigure}
\caption{Diagram (\subref{fig:insideA}) do not sit inside any of the other two diagram, 
but (\subref{fig:insideB}) sits inside (\subref{fig:insideC}).}
\end{figure}

\begin{lemma}\label{lem:sitsinside}
For every pair of skew shapes $\kappavec/\lambdavec$ and $\muvec / \nuvec$
there exists an integer $r\geq 0$ such that
$\muvec/\nuvec$ sits inside $(\kappavec + r\muvec)/(\lambdavec + r\nuvec).$
\end{lemma}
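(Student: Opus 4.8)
The plan is to identify the columns of a skew diagram through conjugate partitions and then to exhibit, inside the stretched diagram, one copy of each column of $\muvec/\nuvec$.

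First I would record the data carried by a single column. In a skew diagram with outer partition $\sigma$ and inner partition $\tau$, the boxes of column $j$ fill a contiguous block of rows $1,2,\dots,\sigma'_j$, of which the top $\tau'_j$ are skew, where $\sigma'_j = \#\{i : \sigma_i \geq j\}$ and $\tau'_j = \#\{i : \tau_i \geq j\}$ are conjugate parts. Hence a column is determined, for the purpose of sitting inside, by the pair consisting of its skew-box count and its total height, and two columns coincide exactly when these pairs agree. In particular the columns of $\muvec/\nuvec$ are indexed by $k=1,\dots,\mu_1$, the $k$-th being recorded by the pair $(\nu'_k,\mu'_k)$.

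Next I would pin down a witness column of each type inside the stretched diagram. Put $\alphavec = \kappavec + r\muvec$ and $\gammavec = \lambdavec + r\nuvec$; this is a genuine skew shape, since $\kappavec \supseteq \lambdavec$ and $\muvec \supseteq \nuvec$ force $\alphavec \supseteq \gammavec$. I claim that once $r$ is large, the column in position $j=rk$ of $\alphavec/\gammavec$ is recorded by $(\gamma'_{rk},\alpha'_{rk}) = (\nu'_k,\mu'_k)$, matching the $k$-th column of $\muvec/\nuvec$. Since conjugation does not distribute over addition, I would not manipulate $\alphavec'$ formally but evaluate it directly as $\alpha'_{rk} = \#\{i : \kappa_i + r\mu_i \geq rk\}$. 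Every row with $\mu_i \geq k$ satisfies $\kappa_i + r\mu_i \geq rk$, whereas every row with $\mu_i \leq k-1$ satisfies $\kappa_i + r\mu_i \leq \kappa_1 + r(k-1) < rk$ as soon as $r > \kappa_1$; thus exactly the $\mu'_k$ rows with $\mu_i \geq k$ are counted, giving $\alpha'_{rk} = \mu'_k$. The identical computation with $\lambdavec,\nuvec$ in place of $\kappavec,\muvec$ yields $\gamma'_{rk} = \nu'_k$ as soon as $r > \lambda_1$.

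Finally, choosing $r > \max(\kappa_1,\lambda_1)$ assigns to the $k$-th column of $\muvec/\nuvec$ the column $rk$ of $\alphavec/\gammavec$, with matching type; as the positions $rk$ are distinct for distinct $k$, the assignment is injective and hence respects multiplicities, which is exactly what sitting inside requires. The step I expect to be the main obstacle is controlling the fixed perturbations $\kappavec$ and $\lambdavec$ against the conjugate (height and skew-count) data: because these perturbations are bounded while $r$ grows, they can only alter the counts at the first $\max(\kappa_1,\lambda_1)$ columns, and it is the choice of the round witness positions $j=rk$ together with the threshold $r>\max(\kappa_1,\lambda_1)$ that turns this heuristic into the stated bound.
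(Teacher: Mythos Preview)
Your proof is correct and takes a genuinely different route from the paper. The paper argues via the tableau-insertion operation $\tinsert$: since the shape $(\kappavec + r\muvec)/(\lambdavec + r\nuvec)$ equals $\kappavec/\lambdavec \tinsert r\muvec/r\nuvec$, and $r\muvec/r\nuvec$ already carries $r$ copies of every column of $\muvec/\nuvec$, the insertion of $\kappavec/\lambdavec$ can deform at most $\kappa_1$ of those copies, so for $r>\kappa_1$ at least one copy of each column survives. Your argument instead bypasses $\tinsert$ entirely and computes the conjugate-partition entries directly, exhibiting the column at position $j=rk$ as an explicit witness for the $k$-th column of $\muvec/\nuvec$. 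This is more self-contained and arguably more rigorous than the paper's ``pushing'' heuristic, at the cost of a small computation; the paper's version is terser and keeps the lemma inside the monoid framework it has just set up. Since $\kappavec\supseteq\lambdavec$ forces $\lambda_1\le\kappa_1$, your threshold $r>\max(\kappa_1,\lambda_1)$ is in fact the same bound $r>\kappa_1$ that the paper obtains.
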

\begin{proof}
Notice that we can equivalently prove that for some $r\geq 0$ $\muvec/\nuvec$ sits inside
$\kappavec/\lambdavec \tinsert r\muvec/r\nuvec.$
The boxes in $\kappavec/\lambdavec$ ``push'' the boxes in $r\muvec/r\nuvec$ at most $\kappa_1$
places to the right when performing the tableau insertion.
If we choose $r > \kappa_1,$ then we will have $r > \kappa_1$ copies of each column in $\muvec/\nuvec,$
and therefore a tableau insertion with $\kappavec/\lambdavec$ cannot deform all of them.
This concludes the proof.
\end{proof}

\begin{lemma}\label{lem:decomposable}
Let $T \in \Tset_{\kappavec/\lambdavec}^n.$ If $\muvec/\nuvec$ sits inside $\kappavec/\lambdavec$
then there exists $T' \in \Tset_{\muvec/\nuvec}^n$ and $T'' \in \Tset_{(\kappavec-\muvec)/(\lambdavec-\nuvec)}^n$
such that $T = T' \tinsert T''.$
\end{lemma}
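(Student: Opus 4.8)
The plan is to exploit the column factorization of tableaux, which already appears in the proof of Proposition~\ref{prop:insertion} and in the subsequent remark (``any tableau can be factored as a product of columns''). First I would decompose the given $T \in \Tset_{\kappavec/\lambdavec}^n$ into its individual columns $C_1, C_2, \dots, C_m$, each regarded as a single-column SSYT with its skew boxes retained, so that $T = C_1 \tinsert C_2 \tinsert \cdots \tinsert C_m$. Since, by Proposition~\ref{prop:insertion}, the shape of a $\tinsert$-product is the sum of the shapes of its factors, the multiset of shapes of $C_1, \dots, C_m$ is exactly the multiset of columns of the diagram $\kappavec/\lambdavec$.

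The hypothesis that $\muvec/\nuvec$ sits inside $\kappavec/\lambdavec$ says precisely that the multiset of columns of $\muvec/\nuvec$ is contained, with multiplicity, in the multiset of columns of $\kappavec/\lambdavec$. Hence I can fix an injection matching each column of $\muvec/\nuvec$ to a distinct column $C_j$ of $T$ of the same shape. I then set $T'$ to be the $\tinsert$-product of the selected columns and $T''$ to be the $\tinsert$-product of the remaining ones.

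Then I verify the required facts. Because the shape of a $\tinsert$-product is additive, the shape of $T'$ is the sum of the column shapes of $\muvec/\nuvec$, which is $\muvec/\nuvec$ itself; likewise the shape of $T''$ is the sum of the remaining column shapes, namely $(\kappavec - \muvec)/(\lambdavec - \nuvec)$. This difference is automatically a legitimate skew shape, being a sum of honest one-box-wide column shapes, so in particular $\kappavec - \muvec$ and $\lambdavec - \nuvec$ are partitions with $\kappavec - \muvec \supseteq \lambdavec - \nuvec$. Proposition~\ref{prop:insertion} guarantees that both $T'$ and $T''$ are genuine SSYTs with entries in $1,\dots,n$, so $T' \in \Tset_{\muvec/\nuvec}^n$ and $T'' \in \Tset_{(\kappavec-\muvec)/(\lambdavec-\nuvec)}^n$. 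Finally, since $\tinsert$ is commutative and associative, regrouping recovers $T' \tinsert T'' = C_1 \tinsert \cdots \tinsert C_m = T$.

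The only genuinely delicate point is the matching step: I must be sure that ``sits inside, counting multiplicities'' really permits a simultaneous choice of one distinct column of $T$ for each column of $\muvec/\nuvec$. This is exactly multiset inclusion, so such a matching exists. The subtlety worth stressing is that the selection is made at the level of column \emph{shapes}, while the entries of $T'$ and $T''$ are inherited from $T$; one checks that reassembling via $\tinsert$ is insensitive to how the columns were partitioned, which is what makes the final recombination $T' \tinsert T'' = T$ legitimate.
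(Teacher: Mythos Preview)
Your argument is correct and follows essentially the same route as the paper's proof: factor $T$ into its columns, use the ``sits inside'' hypothesis to select a sub-multiset of columns whose $\tinsert$-product has shape $\muvec/\nuvec$, and let $T''$ be the product of the remaining columns. Your write-up is considerably more explicit than the paper's (which dispatches the lemma in two sentences), and in particular you spell out why $(\kappavec-\muvec)/(\lambdavec-\nuvec)$ is a legitimate skew shape and why the recombination $T'\tinsert T''=T$ holds, points the paper leaves implicit.
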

\begin{proof}
Since $\muvec/\nuvec$ sits inside $\kappavec/\lambdavec,$ 
we may find columns $C_1,\dots,C_k$ in $\kappavec/\lambdavec$ such that $T'=C_1 \tinsert C_2 \tinsert \cdots \tinsert C_k$
has shape $\muvec/\nuvec.$ 
The tableau $T'$ is of the correct shape, 
and deleting corresponding columns in $T$ yields a tableau $T'' \in \Tset_{(\kappavec-\muvec)/(\lambdavec-\nuvec)}^n.$
\end{proof}

We are now ready to give a proof of Thm.~\ref{thm:greedyskewrecurrence}:
\begin{proof}
We may assume that $\kappavec \supseteq \lambdavec$ since otherwise, 
choose $k$ such that $k(\muvec-\nuvec) \supseteq \lambdavec-\kappavec$ 
and take $\kappavec':= \kappavec + k \nuvec$ and $\lambdavec' := \lambdavec + k \muvec.$
Then $\kappavec' \subseteq \lambdavec'$ and the sequence 
$\{s_{(\lambdavec' + j \muvec)/(\kappavec' + j \nuvec)}\}_{j=0}^\infty$ is the same as
$\{s_{(\lambdavec + j \muvec)/(\kappavec + j \nuvec)}\}_{j=k}^\infty.$

Set $d:=|\Tset_{\muvec/\nuvec}^n|,$ which is the degree of the characteristic polynomial $\chi(t).$
By Lemma \ref{lem:sitsinside}, we may choose $r_0$ such that $\muvec/\nuvec$ sits inside $\kappavec + r_0\muvec/\lambdavec + r_0\nuvec.$ 
Let $r \geq r_0$ be arbitrary. 
It then suffices to prove that the sequence $\{s_{(\kappavec + k \muvec)/(\lambdavec + k \nuvec)}\}_{k=r}^{r+d}$
satisfy the recurrence given by $\chi(t).$

Let $\Tset_j$ be the set/formal sum of the elements in $\Tset_{(\kappavec + (j+r) \muvec)/(\lambdavec + (j+r) \nuvec)}^n.$
By Lemma \ref{lem:decomposable}, it is clear that 
\begin{equation}
\Tset_j \subset \sum_{T \in \Tset_{\muvec/\nuvec}^n} \Tset_{j-1} \tinsert T
\end{equation}
as multisets, (and equality as sets) for $j=1,2,\dots, d.$
Some tableaux appear multiple times on the right-hand side, and
these are exactly the tableaux that may be decomposed as insertions in (at least) two different ways, namely
$$\sum_{T_1,T_2 \in \Tset_{\muvec/\nuvec}^n} \Tset_{j-2} \tinsert T_1 \tinsert T_2,\quad T_1 \neq T_2.$$
Define the multisets/formal sums
$$Q_j:=\sum_{\stackrel{T_1,T_2,\dots, T_j \in \Tset_{\muvec/\nuvec}^n}{a\neq b \Rightarrow T_a \neq T_b }}  (-1)^{d-j} T_1 \tinsert T_2 \tinsert \cdots \tinsert T_j, \quad Q_0:=\emptyset.$$
Hence, $Q_j$ is, as a set, the tableaux in $\Tset_d$ that can be obtained from $\Tset_{d-j}$ by inserting $j$ different tableaux from $\Tset_{\muvec/\nuvec}^n$.

By using the principle of inclusion/exclusion, we obtain
\begin{equation}
Q_0 \tinsert \Tset_d + Q_1 \tinsert \Tset_{d-1} + Q_2 \tinsert \Tset_{d-2} +\cdots + Q_d \tinsert \Tset_{0} = 0.
\end{equation}
Application of the ring homomorphism $\phi$ to this expression now yields the desired identity.
\end{proof}

\section{Applications and further development}

\subsection{Asymptotics}

The following results are corollaries of Theorem \ref{thm:greedyskewrecurrence}:

\begin{corollary}\label{corr:minimalequation}
The sequence $\{s_{(\kappavec + k \muvec)/(\lambdavec + k \nuvec)}(\xvec)\}_{k=r}^\infty$ 
satisfy a linear recurrence, with a minimal characteristic polynomial of the form $\chi_m(t)=\prod_{\wvec \in W} (t-\xvec^{\wvec})$,
where $W \subset \N^n$ is invariant under permutations, 
i.e. $\wvec \in W \Rightarrow (w_{\sigma_1} \dots w_{\sigma_n}) \in W$ for every $\sigma \in \symS_n.$
\end{corollary}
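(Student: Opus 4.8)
The plan is to read off the shape of $\chi_m$ from two classical facts: the uniqueness of the minimal annihilator of a linearly recurrent sequence over a field, and the bialternant/Jacobi--Trudi expansions of Schur polynomials. Throughout I work over the field $K=\C(x_1,\dots,x_n)$ of rational functions. The annihilators of the sequence $\{a_k\}:=\{s_{(\kappavec+k\muvec)/(\lambdavec+k\nuvec)}(\xvec)\}_{k\ge r}$ form a principal ideal in $K[t]$, whose monic generator is the minimal characteristic polynomial $\chi_m$; it divides every annihilator. By Theorem~\ref{thm:greedyskewrecurrence} the polynomial $\chi(t)=\prod_{T}(t-\xvec^{w(T)})$ is an annihilator, so $\chi_m\mid\chi$. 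Since $\chi$ already splits into the linear factors $t-\xvec^{w(T)}$ over $K$, unique factorization forces $\chi_m(t)=\prod_{\wvec}(t-\xvec^{\wvec})^{e_{\wvec}}$ with $e_{\wvec}\ge 0$, where $\wvec$ runs over the distinct weights of $\Tset_{\muvec/\nuvec}^n$; in particular every such $\wvec$ lies in $\N^n$. It then remains to prove that $e_{\wvec}\le 1$ (simple roots) and that $W:=\{\wvec: e_{\wvec}=1\}$ is permutation-invariant.

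The crux is squarefreeness, which I would establish by producing a \emph{pure exponential} expansion $a_k=\sum_{\wvec}c_{\wvec}(\xvec^{\wvec})^k$ with coefficients $c_{\wvec}\in K$, valid for all large $k$. I would start from the Jacobi--Trudi formula, writing $a_k$ as the determinant of the matrix with $(i,j)$-entry $h_{(\kappa_i-\lambda_j-i+j)+k(\mu_i-\nu_j)}(\xvec)$, so that each entry is a complete homogeneous symmetric polynomial whose index is linear in $k$. The Lagrange-interpolation form of the bialternant, $h_m(\xvec)=\sum_{\ell=1}^n \frac{x_\ell^{m+n-1}}{\prod_{j\ne\ell}(x_\ell-x_j)}$, exhibits $h_m$ as a $K$-combination of the geometric sequences $m\mapsto x_\ell^{\,m}$. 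Hence, when $\mu_i-\nu_j>0$ the entry equals $\sum_\ell(\text{const})\,(x_\ell^{\mu_i-\nu_j})^k$; when $\mu_i-\nu_j=0$ it is constant in $k$; and when $\mu_i-\nu_j<0$ its index is eventually negative, so the entry vanishes for $k$ large. In every case, for large $k$ each entry is a $K$-combination of pure exponentials in $k$, and the Leibniz expansion of the determinant — a sum of products of such entries, a product of geometric sequences being again geometric — collapses to the form $a_k=\sum_{\wvec}c_{\wvec}(\xvec^{\wvec})^k$. Because geometric sequences with distinct ratios are $K$-linearly independent, the minimal annihilator of such a sum is exactly $\prod_{\wvec:\,c_{\wvec}\neq0}(t-\xvec^{\wvec})$, which is squarefree; this must be $\chi_m$, and $W=\{\wvec:c_{\wvec}\neq0\}$.

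Permutation invariance then follows from the symmetry of skew Schur polynomials: each $a_k$ is invariant under $\symS_n$ acting on $x_1,\dots,x_n$, so for $\sigma\in\symS_n$ the polynomial $\sigma(\chi_m)$ is a monic annihilator of $\{\sigma(a_k)\}=\{a_k\}$ of the same degree as $\chi_m$; by uniqueness $\sigma(\chi_m)=\chi_m$, and comparing the two factorizations gives $\sigma(W)=W$. Equivalently, applying $\sigma$ to the pure-exponential expansion and matching terms via linear independence yields the invariance directly. The main obstacle is the squarefree claim itself: one must exclude genuine polynomial-in-$k$ prefactors in front of the exponentials, which do arise under specialization (at $\xvec=\onevec$ the sequence counts tableaux and is polynomial in $k$), and the bialternant computation is precisely what shows no such prefactor survives as a rational function in the $x_i$. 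The one technical point requiring care is the regime of large negative $h$-indices, handled by observing that the corresponding Jacobi--Trudi entries vanish for all sufficiently large $k$, so the expansion holds on the tail where the recurrence lives.
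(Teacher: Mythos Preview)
Your argument is correct. The two ingredients you share with the paper are exactly the ones the paper uses: (i) $\chi_m$ divides the explicit annihilator $\chi$ of Theorem~\ref{thm:greedyskewrecurrence}, so every root of $\chi_m$ is a monomial $\xvec^{\wvec}$; and (ii) since each $a_k$ is symmetric in $x_1,\dots,x_n$, the unique minimal annihilator is itself symmetric, forcing $W$ to be $\symS_n$-stable. The paper's proof stops there.

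Where you genuinely go further is in establishing \emph{squarefreeness}. The statement, with $W\subset\N^n$ read as a set, implicitly asserts that $\chi_m$ has simple roots, but the paper's two-line argument does not address this: since distinct tableaux can share a weight, $\chi$ itself is not squarefree over $K$, and divisibility alone does not exclude higher multiplicities in $\chi_m$. Your Jacobi--Trudi computation closes this gap cleanly. Writing each entry $h_{(\kappa_i-\lambda_j-i+j)+k(\mu_i-\nu_j)}$ via the Lagrange/bialternant identity $h_m=\sum_\ell x_\ell^{m+n-1}/\prod_{j\neq\ell}(x_\ell-x_j)$ (valid as a rational identity for $m\ge -(n-1)$, and with the entry identically zero once the index drops below $0$) indeed expresses $a_k$, for $k$ large, as a finite $K$-linear combination of pure geometric sequences $k\mapsto(\xvec^{\wvec})^k$ with pairwise distinct ratios in $K$. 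The minimal annihilator of such a combination is the squarefree product over the contributing $\wvec$, which is exactly what is needed.

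In summary: the overlap with the paper is the symmetry/divisibility part; your Jacobi--Trudi route is an addition that actually justifies the simple-root form of $\chi_m$, something the paper's own proof takes for granted. The trade-off is that the paper's proof is two sentences, while yours imports the determinantal machinery; what you buy is a rigorous reason why no $k$-polynomial prefactors survive generically, a point you correctly flag as the main obstacle.
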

\begin{proof}
Clearly, the roots of $\chi_m(t)$ must be a subset of the roots of \eqref{eq:greedyskewrecurrence}.
The roots of $\chi_m(t)$ be invariant under permutation of variables, since this holds for the Schur polynomials. 
This implies the invariance on $W.$ 
\end{proof}

\begin{corollary}\label{corr:asymptotic}
Let $\kappavec,\lambdavec, \muvec,\nuvec$ be partitions satisfying the conditions in Thm.~\ref{thm:greedyskewrecurrence},
with the additional condition that $\muvec \neq \nuvec.$
Set 
$$P_k(z) = s_{(\kappavec + k \muvec)/(\lambdavec + k \nuvec)}(z,\xi_2,\dots,\xi_n),\quad \xi_i\in \C, |\xi_i|=R \text{ for } i=2,3,\dots,n.$$
Define the \emph{limit set of roots} $A = \{ z\in \C | z = \lim_{k \rightarrow \infty} z_k,  P_k(z_k)=0 \}.$ 
Then $A$ is a circle with radius $R,$ possibly together with the point at the origin.
\end{corollary}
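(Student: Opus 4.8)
The plan is to apply Theorem~\ref{thm:greedyskewrecurrence} to realise $P_k(z)$ as a generalized power sum and then pass to potential theory. Writing $w(T)=(t_1(T),\dots,t_n(T))$ and $N=|\muvec|-|\nuvec|$, the characteristic roots of the recurrence are the monomials $\rho_T(z)=z^{t_1(T)}\prod_{i=2}^{n}\xi_i^{t_i(T)}$, so the general solution takes the form $P_k(z)=\sum_{T}\alpha_T(z)\,\rho_T(z)^{k}$ for analytic coefficients $\alpha_T$ determined by finitely many initial values (with polynomial factors in $k$ when roots coincide). The decisive observation is that, because $|\xi_i|=R$ and $\sum_i t_i(T)=N$ is constant, $|\rho_T(z)|=|z|^{t_1(T)}R^{\,N-t_1(T)}$ depends only on the number of ones $t_1(T)$. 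Thus the modulus of every root is governed by a single integer $p=t_1(T)$, ranging between $\ell:=\min_T t_1(T)$ and $m:=\max_T t_1(T)$.

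Next I would study the subharmonic functions $\tfrac1k\log|P_k(z)|$. By the previous observation their natural limit is
\[
u(z)=\max_{\ell\le p\le m}\bigl(p\log|z|+(N-p)\log R\bigr)= N\log R+\begin{cases} m\log(|z|/R),& |z|\ge R,\\ \ell\log(|z|/R),& |z|\le R,\end{cases}
\]
a radial function, harmonic off the circle $|z|=R$ with a single crease there. A direct computation of its Riesz measure gives
\[
\tfrac1{2\pi}\Delta u=\ell\,\delta_0+(m-\ell)\,\omega_R,
\]
where $\omega_R$ is the uniform probability measure on $\{|z|=R\}$; the atom at the origin reflects the factor $z^{\ell k+O(1)}$ dividing every $P_k$, while the jump $m-\ell$ in the radial slope produces the circle. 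Granting that $\tfrac1k\log|P_k|\to u$ in $L^1_{\mathrm{loc}}(\C)$, the standard equidistribution theorem for zeros of polynomials forces the normalized root-counting measures to converge weakly to $\tfrac1{2\pi}\Delta u$, whose support is exactly $\{|z|=R\}$ together with the origin when $\ell>0$. This is precisely the asserted description of $A$; the hypothesis $\muvec\neq\nuvec$ enters here to guarantee that $\muvec/\nuvec$ is nonempty and, generically, that $m>\ell$, so that the circle actually carries mass.

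The main obstacle is the $L^1_{\mathrm{loc}}$ convergence $\tfrac1k\log|P_k|\to u$, equivalently showing that cancellation among the several roots $\rho_T$ of equal modulus cannot depress the growth of $P_k$ on a set of positive planar measure. This is exactly the point at which a naive, root-by-root Beraha--Kahane--Weiss analysis breaks down: whenever two tableaux share the same value of $t_1$, the ratio $\rho_T/\rho_{T'}$ is a constant of modulus one, violating the nondegeneracy hypothesis of that theorem and allowing, a priori, zeros to accumulate off the circle. The advantage of the potential-theoretic route is that it needs the growth rate to equal $u$ only almost everywhere, which is far more robust; I would obtain the upper bound $\limsup_k\tfrac1k\log|P_k|\le u$ from the triangle inequality and the lower bound almost everywhere by fixing a circle $|z|=\rho\neq R$, isolating the dominant block $t_1=m$ (resp.\ $t_1=\ell$) and showing that its contribution, as an almost periodic function of $k$, stays bounded away from zero for almost every argument. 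For the reverse inclusion $\{|z|=R\}\subseteq A$ one may alternatively argue directly with the argument principle, counting $\sim(m-\ell)k$ zeros in thin annuli about $|z|=R$ and using the already-established exclusion of limit points with $0<|z|\neq R$ to pin them to the circle.
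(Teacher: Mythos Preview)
Your approach differs substantially from the paper's, which consists of a single sentence invoking Theorem~\ref{thm:greedyskewrecurrence}, Corollary~\ref{corr:minimalequation}, and the Beraha--Kahane--Weiss theorem. You are right to be uneasy about a direct application of that theorem: once one specialises $x_1=z$ and $x_i=\xi_i$ with $|\xi_i|=R$, any two characteristic roots $\rho_{\wvec},\rho_{\wvec'}$ with $w_1=w_1'$ differ by a constant of modulus one, which is exactly the degeneracy the non\-degeneracy hypothesis of Beraha--Kahane--Weiss excludes. The paper does not comment on this point, so in a sense your proposal is more candid about where the difficulty lies than the one-line original.

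That said, the potential-theoretic route you propose does not close the gap either. Even granting the $L^1_{\mathrm{loc}}$ convergence $\tfrac1k\log|P_k|\to u$ (which you correctly flag as the crux and do not prove), what follows is weak convergence of the \emph{normalised zero-counting measures} to $\tfrac{1}{2\pi}\Delta u$. This tells you where the bulk of the zeros accumulate and hence yields $\operatorname{supp}\bigl(\tfrac{1}{2\pi}\Delta u\bigr)\subseteq A$, but it does \emph{not} give the opposite inclusion: an $o(\deg P_k)$ family of zeros sitting at, or converging to, a fixed point $z_0$ with $0<|z_0|\neq R$ would be invisible to the weak limit yet would place $z_0$ in $A$. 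Your final paragraph appeals to ``the already-established exclusion of limit points with $0<|z|\neq R$'' to pin the remaining zeros to the circle, but that exclusion is precisely what the equidistribution argument fails to establish, so the reasoning is circular. Ruling out such stray limit points requires a genuine pointwise lower bound on $|P_k(z)|$ for \emph{every} $z$ in compact subsets of $\{0<|z|\neq R\}$ and \emph{every} large $k$, whereas your almost-periodicity sketch only promises this for almost every $z$ (or along a density-one set of $k$). In short, the obstruction you diagnosed for Beraha--Kahane--Weiss reappears here in disguise.
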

\begin{proof}
This follows from Thm.~\ref{thm:greedyskewrecurrence}, Corr.~\ref{corr:minimalequation} together with the main theorem in \cite{Beraha75limits}.
\end{proof}

\begin{example}
If $\lambdavec_n = (n,n-1,n-2,\dots,0),$ then \emph{all} roots of $s_{k \lambdavec_n}(t,\onevec^n)=0$
lie on the unit circle, for every $n,k.$
\end{example}

The following conjecture is a generalization of Corr.~\ref{corr:asymptotic}:
\begin{conjecture}
Let $1 \leq j \leq n$ and $\kappavec_i,\lambdavec_i, \muvec_i,\nuvec_i$, $1\leq i \leq j,$ 
be partitions satisfying the assumptions in Thm.~\ref{thm:greedyskewrecurrence}.

Let $x_i\in \C,$ $|\xi_i|=R$ for $i=j+1,\dots,n$ and define
\begin{equation}
P_k^i(z_1,\dots,z_j) = s_{(\kappavec_i + k \muvec_i)/(\lambdavec_i + k \nuvec_i)}(z_1,z_2,\dots,z_j,\xi_{j+1},\dots,\xi_n), 1\leq i \leq j.
\end{equation}
Set
$$A = \{ \zvec \in \C^j | \zvec = \lim_{k \rightarrow \infty} \zvec_k,  P_k^1(\zvec_k)=P_k^2(\zvec_k)=\dots =P_k^j(\zvec_k) =0 \}.$$
Then, under some mild non-degeneracy conditions on the partitions,
\begin{equation}
A = Z \cup
\begin{cases}
\{ R(e^{i\theta_1},e^{i\theta_2},\dots,e^{i\theta_j}) | \theta_1,\theta_2,\dots,\theta_j \in \R \} \text{ if } j<n, \\
\{ R(e^{i\theta_1},e^{i\theta_2},\dots,e^{i\theta_j}) | R,\theta_1,\theta_2,\dots,\theta_j \in \R \} \text{ if } j=n.
\end{cases}
\end{equation}
where $Z$ is either $\emptyset$ or the set consisting of the origin.
\end{conjecture}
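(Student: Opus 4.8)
The plan is to generalize the one-variable argument behind Corollary~\ref{corr:asymptotic} --- the combination of the explicit recurrence with the Beraha--Kahane--Weiss theorem \cite{Beraha75limits} --- to a system of $j$ equations in $j$ unknowns by passing to the moduli of the variables. First I would apply Theorem~\ref{thm:greedyskewrecurrence} and Corollary~\ref{corr:minimalequation} to each family separately, writing, for $\xvec=(z_1,\dots,z_j,\xi_{j+1},\dots,\xi_n)$,
\[
P_k^i(\zvec)=\sum_{\wvec\in W_i}\alpha_{i,\wvec}(\zvec)\,(\xvec^{\wvec})^k,
\]
where $W_i\subset\N^n$ is the permutation-invariant support of the minimal recurrence for the $i$-th family and the $\alpha_{i,\wvec}$ are functions of $\zvec$ independent of $k$. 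Since every $\wvec\in W_i$ has weight equal to the fixed number of boxes $N_i=|\muvec_i|-|\nuvec_i|$, the modulus of each eigenvalue is $|\xvec^{\wvec}|=\exp\langle\rho,\wvec\rangle$ with $\rho=(\log|z_1|,\dots,\log|z_j|,\log R,\dots,\log R)$, a quantity depending on $\zvec$ only through $\mathbf{u}=(\log|z_1|,\dots,\log|z_j|)$.

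The key structural observation is that, away from cancellation, $\tfrac1k\log|P_k^i(\zvec)|$ is governed by the linear-programming maximum $\max_{\wvec\in W_i}\langle\rho,\wvec\rangle$, so the zeros of $P_k^i$ concentrate, in modulus, on the corner locus $\mathcal T_i\subset\R^j$ where this maximum is attained by at least two $\wvec$; this $\mathcal T_i$ is precisely the tropical hypersurface dual to the Newton polytope of $W_i$. I would first record the decisive consequence of the constraint that every weight sums to $N_i$: at $\mathbf{u}=(\log R,\dots,\log R)$ every eigenvalue has the same modulus $R^{N_i}$, so the maximum is attained by \emph{all} of $W_i$ and the diagonal point lies in every $\mathcal T_i$. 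When $j=n$ there is no pinned coordinate, so the whole ray $\{u_1=\dots=u_n\}$ lies in $\bigcap_i\mathcal T_i$, explaining why $R$ is free in that case, whereas for $j<n$ the pinned value $\log R$ forces $u_1=\dots=u_j=\log R$. The conjectured modulus constraint is thus exactly the assertion that, under the non-degeneracy hypotheses, the intersection $\bigcap_{i=1}^j\mathcal T_i$ reduces to this diagonal locus (together with the boundary direction $u_l\to-\infty$ accounting for the possible point $Z$ at the origin); establishing this reduction is a convex-geometric sub-problem, since a generic intersection of $j$ tropical hypersurfaces in $\R^j$ is finite and the diagonal point is always common.

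Granting the modulus constraint, the remaining task splits into the two inclusions. For the inclusion $A\supseteq\text{torus}$ I would show that on the torus the massive degeneracy above produces genuine (not merely modular) cancellation: following the Beraha--Kahane--Weiss mechanism fibre-wise, at a point where $j$ independent dominant ties occur one can solve, to leading order in $1/k$, the $j$ leading equations for the phases and produce a sequence $\zvec_k\to\zvec_0$ of common zeros, so every point of the torus is attained. For the inclusion $A\subseteq\text{torus}$ I would use that, once $\bigcap_i\mathcal T_i$ has been shown to reduce to the diagonal, any $\zvec_0$ whose moduli lie off the diagonal admits some index $i$ for which $\max_{\wvec\in W_i}\langle\rho,\wvec\rangle$ is attained by a \emph{single} $\wvec$; then $P_k^i$ is dominated by the one term $\alpha_{i,\wvec}(\zvec)(\xvec^{\wvec})^k$, which (by the non-degeneracy assumption $\alpha_{i,\wvec}\not\equiv0$) stays bounded away from zero on a neighbourhood, so no common zero can accumulate at $\zvec_0$.

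The hard part will be the absence of a ready-made several-variable analogue of Beraha--Kahane--Weiss: the one-variable theorem controls the zeros of a single sequence of analytic functions, but here I must control the \emph{common} zeros of $j$ sequences simultaneously, and the degenerate pairing of eigenvalues $\xvec^{\wvec}$ sharing a modulus but differing in phase (already requiring care in deducing Corollary~\ref{corr:asymptotic}) now occurs in a $j$-parameter family. Making the phase-solvability step rigorous --- that is, showing that the $j$ hypersurfaces $\mathcal T_i$ not only meet along the diagonal but that the accompanying phase equations are nondegenerate enough to force actual common zeros densely on the whole torus --- is where I expect the real difficulty, and it is exactly here that the unspecified ``mild non-degeneracy conditions'' on the partitions would have to be pinned down.
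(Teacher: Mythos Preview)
The statement you are addressing is labelled as a \emph{conjecture} in the paper, and the paper contains no proof of it. The only accompanying text is a Remark observing that a slightly modified special case is known (multivariate Chebyshev polynomials via \cite{Beerends91chebyshev}); no argument toward the general statement is offered. There is therefore nothing in the paper to compare your proposal against.

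As a strategy for attacking the open problem, your outline is sensible and you correctly identify where the real obstacles lie: there is no off-the-shelf several-variable analogue of Beraha--Kahane--Weiss, the ``mild non-degeneracy conditions'' are deliberately left unspecified by the author, and the step from the tropical/modulus picture to actual common zeros on the full torus is genuinely delicate. Your honest flagging of the phase-solvability step as the hard part is accurate; until that step is made rigorous and the non-degeneracy hypotheses are pinned down, what you have is a heuristic road map rather than a proof. In short: the paper does not prove this, and neither does your proposal, but your diagnosis of the difficulties is on target.
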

\begin{remark}
This is true in a slightly modified special case. Multivariate Chebyshev polynomials may be defined 
as certain polynomials $P_k^i$ as above, with an appropriate change of variables. The support of the orthogonality measure 
is the image of $A$, under the same mapping as the change of variables. 
See \cite{Beerends91chebyshev} for the connection between multivariate Chebyshev polynomials of the second kind, and Schur polynomials.
\end{remark}

\subsection{Kostka coefficients}

The recurrence \eqref{eq:greedyskewrecurrence} is in some cases not the shortest possible.
For some applications, this is not a problem but it is not completely satisfying.
Below we conjecture the shortest possible recurrence. We need the following definitions:

\begin{definition}
Given a partition $\lambdavec,$ we define the \defin{monomial symmetric polynomial} $m_\lambdavec$ as
\begin{equation}
m_\lambdavec = \sum_{\wvec} \xvec^\wvec,
\end{equation}
where the sum is taken over \emph{distinct} permutations of $\lambdavec.$ 
\end{definition}
Note that $m_{k \lambdavec}(x_1,x_2,\dots,x_n) = m_{\lambdavec}(x_1^k,x_2^k,\dots,x_n^k)$ by definition.

\begin{definition}
The Kostka coefficient $K_{\lambdavec/\muvec,\wvec}$ is the number of tableaux of shape $\lambdavec/\muvec$ with weight $\wvec.$
\end{definition}
It is well-known that $K_{\lambdavec/\muvec,\wvec} = K_{\lambdavec/\muvec,\overline{\wvec}}$ where $\overline{\wvec}$ is the vector obtained from $\wvec$
by rearranging the elements as a partition, in decreasing order.
It is evident that $K_{\lambdavec/\muvec,\wvec} = 0$ if $|\wvec| \neq |\lambdavec|-|\muvec|$.
The Kostka numbers and the monomial symmetric polynomials are related by:
\begin{equation}
s_{\lambdavec/\muvec}(\xvec) = \sum_{\wvec} K_{\lambdavec/\muvec,\wvec} m_\wvec(\xvec), 
\end{equation}
where the sum is taken over all partitions $\wvec.$

We now give a hands-on application of tableau insertion and Kostka coefficients:
\begin{proposition}
If $K_{\lambdavec/\muvec,\wvec}>0$ then $K_{k\lambdavec/k\muvec,k\wvec}>0$ for any integer $k>0.$
\end{proposition}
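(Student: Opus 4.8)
The plan is to realize the statement as a direct consequence of the tableau insertion machinery developed in Proposition~\ref{prop:insertion}. Recall that $K_{\lambdavec/\muvec,\wvec}>0$ means precisely that there exists at least one tableau $T \in \Tset_{\lambdavec/\muvec}^n$ with $w(T)=\wvec$. The goal is to manufacture, from such a $T$, a tableau of shape $k\lambdavec/k\muvec$ and weight $k\wvec$, which would witness $K_{k\lambdavec/k\muvec,k\wvec}>0$.

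First I would take the given tableau $T$ and simply form the $k$-fold tableau insertion $T^{\tinsert k} := T \tinsert T \tinsert \cdots \tinsert T$ ($k$ copies). By Proposition~\ref{prop:insertion} applied repeatedly, this is a genuine SSYT; its shape is $k\lambdavec/k\muvec$ since the shape operation adds $\muvec$-parts and $\lambdavec$-parts elementwise, and its weight is $k \cdot w(T) = k\wvec$ by additivity of the weight under insertion. Thus $T^{\tinsert k} \in \Tset_{k\lambdavec/k\muvec}^n$ has weight $k\wvec$, which gives $K_{k\lambdavec/k\muvec,k\wvec}>0$ immediately. One small bookkeeping point to check: the associativity and commutativity of $\tinsert$ (noted right after the definition) guarantee the $k$-fold product is well-defined without worrying about the order of insertion.

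The only genuine subtlety I anticipate is verifying that the shape really is $k\lambdavec/k\muvec$ rather than something only equal up to rearrangement. Here I would lean on the precise shape statement in Proposition~\ref{prop:insertion}, namely that inserting a tableau of shape $\kappavec/\lambdavec$ with one of shape $\muvec/\nuvec$ produces shape $(\kappavec+\muvec)/(\lambdavec+\nuvec)$; iterating with both factors equal to $\lambdavec/\muvec$ yields shape $k\lambdavec/k\muvec$ exactly, using that scalar multiplication on partitions is defined elementwise. Since Proposition~\ref{prop:insertion} already certifies the result is an SSYT of that exact skew shape, there is no residual obstacle and the argument is essentially a one-line application once the setup is in place.

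I would write the proof as follows.
\begin{proof}
Since $K_{\lambdavec/\muvec,\wvec}>0,$ there is a tableau $T \in \Tset_{\lambdavec/\muvec}^n$ with $w(T)=\wvec.$
Consider the $k$-fold tableau insertion
$$T^{\tinsert k} := \underbrace{T \tinsert T \tinsert \cdots \tinsert T}_{k \text{ copies}}.$$
By Proposition~\ref{prop:insertion} and induction on $k,$ this is an SSYT of shape $k\lambdavec/k\muvec$
with weight $w(T^{\tinsert k}) = k\, w(T) = k\wvec,$ where we use that addition and scalar
multiplication on partitions are performed elementwise.
Hence $T^{\tinsert k} \in \Tset_{k\lambdavec/k\muvec}^n$ is a tableau of shape $k\lambdavec/k\muvec$
and weight $k\wvec,$ so $K_{k\lambdavec/k\muvec,k\wvec} \geq 1 > 0.$
\end{proof}
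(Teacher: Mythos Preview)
Your proof is correct and follows exactly the same approach as the paper: take a tableau $T$ of shape $\lambdavec/\muvec$ and weight $\wvec$ witnessing $K_{\lambdavec/\muvec,\wvec}>0$, and form its $k$-fold insertion $T \tinsert \cdots \tinsert T$, which by Proposition~\ref{prop:insertion} is an SSYT of shape $k\lambdavec/k\muvec$ and weight $k\wvec$. Your write-up is slightly more explicit about invoking Proposition~\ref{prop:insertion} and the induction, but the argument is identical.
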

\begin{proof}
Let $T$ be a tableau of shape $\lambdavec/\muvec$ with weight $\wvec.$
Then the $k-$th power $T \tinsert \cdots \tinsert T$ is a tableau with shape $k\lambdavec/k\muvec$ and weight $k\wvec$,
and hence $K_{k\lambdavec/k\muvec,k\wvec}>0.$
\end{proof}
\begin{remark}
In fact, $K_{\lambdavec,\wvec}>0 \Leftrightarrow K_{k\lambdavec,k\wvec}>0$ and this is known as
Fulton's K-saturation conjecture. 
Its proof is given by \cite{King04stretched}, which uses the K-hive model machinery.
\end{remark}

We now give a conjectural sharper version of Thm.\ref{thm:greedyskewrecurrence}:
\begin{conjecture}\label{conj:skewrecurrence}
Let $\kappavec,\lambdavec, \muvec,\nuvec$ be partitions of length at most $n,$ 
such that $\muvec \supseteq \nuvec$ and $k(\muvec-\nuvec) \supseteq \kappavec-\lambdavec$ for some positive integer $k.$
Set
\begin{eqnarray*}
W &=& \{\wvec \in \N^n | K_{\muvec/\nuvec, \wvec} > 0 \wedge \overline{\wvec} \dominate \overline{\muvec-\nuvec} \}.
\end{eqnarray*}
Then, for sufficiently large $r,$ the sequence $\{s_{(\kappavec + k \muvec)/(\lambdavec + k \nuvec)}(\xvec)\}_{k=r}^\infty$ 
satisfy a linear recurrence with the minimal characteristic polynomial 
\begin{equation}\label{eq:skewcharacteristic}
\chi(t) = \prod_{\wvec \in W} (t - x^\wvec).
\end{equation}
\end{conjecture}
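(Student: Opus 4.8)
The plan is to work over the field $\Q(\xvec)$ and to pin down the \emph{support} of the eigenvalue expansion of the sequence $a_k := s_{(\kappavec + k\muvec)/(\lambdavec + k\nuvec)}(\xvec)$. By Thm.~\ref{thm:greedyskewrecurrence} this sequence is linearly recurrent with all characteristic roots contained in $\{\xvec^\wvec : K_{\muvec/\nuvec,\wvec}>0\}$, so there is a unique expansion $a_k = \sum_\wvec C_\wvec(k)\,(\xvec^\wvec)^k$ with $C_\wvec(k)\in\Q(\xvec)[k]$ and the sum taken over weights $\wvec$ with $K_{\muvec/\nuvec,\wvec}>0$. The first step is to show that every $C_\wvec(k)$ is in fact \emph{constant} in $k$; granting this, the minimal characteristic polynomial is automatically squarefree and equals $\prod_{\wvec:\,C_\wvec\neq 0}(t-\xvec^\wvec)$, so the whole statement reduces to the identity $\{\wvec : C_\wvec\neq 0\}=W$. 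To get the constancy I would use the Jacobi--Trudi determinant $a_k=\det\big(h_{k(\mu_i-\nu_j)+b_{ij}}\big)_{i,j=1}^n$, where $b_{ij}=\kappa_i-\lambda_j-i+j$, together with the partial-fraction identity $h_m(\xvec)=\sum_{p=1}^n x_p^{\,m+n-1}/D_p$ with $D_p:=\prod_{q\neq p}(x_p-x_q)$: each entry is, for large $k$, a pure exponential sum in $k$ with coefficients independent of $k$, and sums and products of such sequences stay of this form.

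Expanding the determinant gives the working formula. For sufficiently large $k$ only permutations $\sigma\in\symS_n$ with $\mu_i\ge\nu_{\sigma(i)}$ for all $i$ survive, and
$$a_k=\sum_{\sigma}\sum_{p\colon[n]\to[n]}\operatorname{sgn}(\sigma)\Big(\prod_i \frac{x_{p_i}^{\,b_{i\sigma(i)}+n-1}}{D_{p_i}}\Big)\,(\xvec^{\wvec^{(\sigma,p)}})^k,$$
where the rate vector $\wvec^{(\sigma,p)}$ has $q$-th coordinate $\sum_{i:\,p_i=q}(\mu_i-\nu_{\sigma(i)})$. Thus $C_\wvec$ is the signed sum of the bracketed coefficients over all $(\sigma,p)$ producing the rate $\wvec$, and the task is to decide for which $\wvec$ this sum is nonzero.

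For the inclusion $\{C_\wvec\neq 0\}\subseteq W$ I would combine two facts. Membership in $\{K_{\muvec/\nuvec,\wvec}>0\}$ is free from Thm.~\ref{thm:greedyskewrecurrence}. For the domination constraint, note that every occurring rate satisfies $\overline{\wvec}\dominate \overline{(\mu_i-\nu_{\sigma(i)})_i}$, because a non-injective assignment $p$ only merges coordinates and merging raises a vector in dominance order; and a rearrangement (majorization) lemma shows that for weakly decreasing $\muvec,\nuvec$ the identity pairing is dominance-minimal, i.e. $\overline{(\mu_i-\nu_{\sigma(i)})_i}\dominate\overline{\muvec-\nuvec}$ whenever all differences are non-negative. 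Chaining these gives $\overline{\wvec}\dominate\overline{\muvec-\nuvec}$, hence $\wvec\in W$. In the non-skew case $\nuvec=\emptyset$ this recovers the classical Weyl/bialternant computation, where the injective assignments $p$ produce exactly the permutations of $\muvec$ and the non-injective ones cancel because the corresponding alternant has two proportional rows, giving $W=\symS_n\cdot\muvec$.

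The hard part is the reverse inclusion $W\subseteq\{C_\wvec\neq 0\}$, i.e. genuine non-vanishing of $C_\wvec$ for every $\wvec\in W$. For weights $\wvec$ that are \emph{vertices} of the Newton (weight) polytope one can isolate $C_\wvec=\lim_{k\to\infty} a_k/(\xvec^\wvec)^k$ by specializing the $x_i$ so that $\xvec^\wvec$ is strictly dominant, and evaluate it as a non-vanishing alternant exactly as in the straight-shape case. The obstacle is the non-vertex weights of $W$ (for instance interior lattice points of an edge, which already occur for disconnected skew shapes): here several pairs $(\sigma,p)$ feed the same rate, the clean ``repeated-row'' cancellation of the straight case no longer applies because the rate itself depends on $\sigma$, and one must rule out cancellation of the signed coefficient sum while also confirming that every $\wvec\in W$ is actually hit by some $(\sigma,p)$. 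I expect this to require an explicit evaluation of the inner sum as a product of Schur-type alternants, generalizing the stabilizer/antisymmetrization argument from the straight case, together with a check that the shift parameters $\kappavec,\lambdavec$ — which enter only through the $b_{ij}$ and not through the rates — do not force an accidental vanishing under the hypothesis $k(\muvec-\nuvec)\supseteq\kappavec-\lambdavec$. Controlling this cancellation uniformly over all of $W$ is the crux of the conjecture.
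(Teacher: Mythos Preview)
The statement you are attempting to prove is labelled a \emph{Conjecture} in the paper, and the paper gives no proof of it; immediately after stating it the author remarks that, unlike for Theorem~\ref{thm:greedyskewrecurrence}, ``it is not even clear if such interpretation exists.'' So there is nothing in the paper to compare your argument against.

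On its own merits, your proposal is an outline with an honestly identified gap rather than a proof. The Jacobi--Trudi/partial-fraction set-up is sound over $\Q(\xvec)$, and it does yield the useful intermediate conclusion that the minimal characteristic polynomial is \emph{squarefree}: each determinant entry becomes, for large $k$, a pure exponential sum in $k$ with $k$-independent coefficients, and this is closed under sums and products, so no $k$-polynomial factors can appear in the eigen-expansion. That already sharpens Theorem~\ref{thm:greedyskewrecurrence}, where $\chi(t)$ typically has repeated roots. Your argument for the inclusion $\{C_\wvec\neq 0\}\subseteq W$ is also reasonable: the Kostka constraint $K_{\muvec/\nuvec,\wvec}>0$ follows because the minimal polynomial divides the $\chi(t)$ of Theorem~\ref{thm:greedyskewrecurrence}, and the dominance constraint $\overline{\wvec}\dominate\overline{\muvec-\nuvec}$ follows from the two ingredients you cite (merging coordinates moves up in dominance; among pairings of two decreasing sequences the identity pairing is dominance-minimal). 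One small point to be careful with is the case $\mu_i=\nu_{\sigma(i)}$, where the corresponding entry $h_{b_{i\sigma(i)}}$ is constant in $k$ and may vanish or not depending on the shift; this does not affect the argument but should be handled explicitly.

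The genuine gap is exactly where you locate it: the reverse inclusion $W\subseteq\{C_\wvec\neq 0\}$. Your vertex argument only covers the extreme weights, and for interior $\wvec\in W$ you need both that some pair $(\sigma,p)$ actually hits $\wvec$ and that the signed sum of coefficients over all such pairs does not cancel. Neither step is supplied, and you correctly note that the straight-shape antisymmetrization mechanism does not carry over because the rate vector now depends on $\sigma$. Until this non-vanishing is established, the minimality claim remains open, which is consistent with the paper's designation of the statement as a conjecture.
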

In Thm.~\ref{thm:greedyskewrecurrence}, it is obvious how to interpret the 
coefficients in the linear recurrence as certain tableaux insertions, mapped under the ring homomorphism $\phi.$
However, in the conjectures above, it is not even clear if such interpretation exists.

\section{Acknowledgement}
The author would like to thank Prof.~B.~Shapiro for helpful comments on the text.


\end{document}